\documentclass[11pt, a4paper, reqno]{amsart}

\usepackage{amsfonts,amsmath,amsthm}
\usepackage{amssymb}
\usepackage{latexsym}
\usepackage{mathrsfs}
\usepackage{amsthm}
\usepackage{enumitem}
\usepackage{mathtools}
\usepackage{moreenum}
\usepackage{amssymb,verbatim}
\usepackage{amsmath}
\usepackage{amssymb}
\usepackage{amsfonts}
\usepackage{etoolbox}

\usepackage[english]{babel}

\usepackage[dvipsnames]{xcolor}

\usepackage[colorlinks,citecolor=blue,urlcolor=black,linkcolor=blue!90!red]{hyperref}

\theoremstyle{plain}
\newtheorem{theorem}{Theorem}[section]
\newtheorem{lemma}[theorem]{Lemma}

\theoremstyle{definition}
\newtheorem{definition}[theorem]{Definition}
\newtheorem{example}[theorem]{Example}

\theoremstyle{remark}
\newtheorem*{remark}{Remark}

\newcommand{\N}{{\mathbb N}}

\newcounter{abc}   
\newcounter{iiiii} 

\newenvironment{aequivalenz}
{\setcounter{iiiii}{0}
\begin{list}%
{{\rm (\roman{iiiii})}}
{\usecounter{iiiii}
\parsep=0pt plus 1pt
\topsep=1pt plus 2pt minus 1pt
\itemsep=1pt plus 2pt minus 1pt
\leftmargin=3\baselineskip \labelsep=.6\baselineskip
\labelwidth=2.4\baselineskip
\rightmargin 0pt}%
}
{\end{list}}

\newenvironment{statements}%
{\setcounter{abc}{0}
\begin{list}%
{{\rm (\alph{abc})}}
{\usecounter{abc}
\parsep=0pt plus 1pt
\topsep=1pt plus 2pt minus 1pt
\itemsep=1pt plus 2pt minus 1pt
\leftmargin=3\baselineskip \labelsep=.6\baselineskip
\labelwidth=2.4\baselineskip
\rightmargin 0pt}%
}
{\end{list}}

{\begin{list}%
{$\bullet$}
{\leftmargin=3\baselineskip
\labelsep=\baselineskip \labelwidth=2.5\baselineskip
\parsep=0pt plus 1pt
\topsep=1pt plus 2pt minus 1pt
\itemsep=1pt plus 2pt minus 1pt
\rightmargin 0pt}%
}
{\end{list}}

\newcommand{\bea}{\begin{eqnarray*}}
\newcommand{\eea}{\end{eqnarray*}}
\newcommand{\beq}{\begin{equation}}
\newcommand{\eeq}{\end{equation}}
\newcommand{\begsta}{\begin{statements}}
\def\endsta{\end{statements}}
\newcommand{\begaeq}{\begin{aequivalenz}}
\def\endaeq{\end{aequivalenz}}

\numberwithin{equation}{section}

\begin{document}

\title[Fixed points of uniformly asymptotically H\"older-Lipschitz maps]
{Fixed points for uniformly asymptotically H\"older-Lipschitz mappings}

\author{C. S. R. da Silva and L. G. Josino}

\date{\today}

\subjclass[2020]{Primary 47H10; Secondary 46B20, 47H09}

\keywords{Uniformly asymptotically H\"older-Lipschitzian mapping, uniformly normal structure}

 \thanks{The authors were partially
 supported by CAPES-Brazil.}

\address{Departamento de Matemática,
Universidade Federal do Ceará,
 Campus do Pici, 60455-360, Fortaleza-CE, Brasil}
\email{csergiomat@gmail.com; lu-gauss@hotmail.com}

\begin{abstract}
In this work, we show that if $K$ is a nonempty, closed, bounded, and
convex subset of $X$ and $1\leq \gamma<\sqrt{N(X)},$ where $N(X)$ denotes the
Bynum normal structure coefficient, then every uniformly asymptotically H\"older
$\gamma$-Lipschitz mapping $T\colon K\to K$ has a fixed point.
To achieve this, it was necessary to guarantee the existence of a subsequence of a bounded sequence whose asymptotic diameter is controlled by the quantity $D(\{x_n\}),$ introduced by Jiménez-Melado,
while simultaneously preserving the asymptotic radii at every point of the closure of the linear span generated by the original sequence. This allowed us to remove the H\"older exponent from the calculations and obtain a H\"older-type extension of the Casini--Maluta, Lim--Xu, and, consequently, Kim--Xu theorems.
\end{abstract}

\maketitle

\thispagestyle{empty}

\section{Introduction}

Let $X$ be a Banach space and let $K$ be a nonempty, closed, bounded, and
convex subset of $X$. The fixed point property for mappings whose iterates satisfy
uniform metric estimates is closely related to the geometry of the
space under consideration. Goebel and Kirk proved in 1972 that
uniformly convex spaces have the fixed point property for asymptotically nonexpansive mappings (see \cite{GoebelKirk1972}). Recall that a mapping $T:K\to K$ is asymptotically nonexpansive if, for every natural number $n\ge1$ and for every $x,y\in K$, $\|T^n(x)-T^n(y)\|\le k_n\|x-y\|$, where $(k_n)_{n\in\mathbb{N}}$ is a sequence such that $k_n\to1$. In 1973, the same authors considered the class of
uniformly $\gamma$-Lipschitzian mappings $T\colon K\to K$, characterized by
\[
\|T^n(x)-T^n(y)\|\leq \gamma\|x-y\|,
\qquad x,y\in K,\quad n\geq 1.
\]
They proved that such a mapping has a fixed point provided that

\[
\gamma\left(1-\delta_X\left(\frac{1}{\gamma}\right)\right)<1,
\]
where $\delta_X$ is the modulus of
convexity of $X$ (see \cite{GoebelKirk1973}). In another direction, in 1984, Maluta, while investigating the uniform normal
structure of Banach spaces, proved, in particular, that every Banach space
with uniform normal structure is reflexive \cite[Corollary 4.1]{maluta1984uniformly}. In the following year, Casini and Maluta \cite{CM}
proved that a uniformly
$\gamma$-Lipschitzian mapping has a fixed point whenever $X$ has uniform normal
structure and
\[
1\leq \gamma<\sqrt{N(X)},
\]
where $N(X)$ denotes the normal structure coefficient introduced by Bynum
\cite{Bynum}.
The normal structure coefficient of $X$ is defined by
\[
N(X)
=
\inf\left\{
\frac{\mathsf{d}(E)}{r(E)}
:
E\in\mathcal{B}(X) \text{ and } \mathsf{d}(E)>0
\right\},
\]
where $\mathcal{B}(X)$ denotes the family of all nonempty,
closed, bounded, and convex subsets of $X$, $\mathsf{d}(E)$ is the diameter of $E$, and
$r(E)$ is the Chebyshev radius of $E$ relative to itself. We also define
\[
    \widetilde{N}(X)=[N(X)]^{-1}.
\]
Consequently, $X$ has uniform normal structure if, and only if,
$\widetilde{N}(X)<1$, or, equivalently, $N(X)>1$.

The Casini--Maluta Theorem was subsequently extended by Lim and Xu
\cite{LimXu}, in which the closedness and convexity conditions on the domain of the mapping were removed, followed by Kim and Xu \cite{KimXu}, who combined this extension with a minimal-set argument to show that every Banach space with uniform normal structure has the fixed point property for asymptotically nonexpansive mappings. Thus, the results of Casini--Maluta,
Lim--Xu, and Kim--Xu form a natural progression of the fixed point property from uniformly Lipschitz mappings to asymptotically nonexpansive mappings.

This line of investigation was subsequently considered under pointwise asymptotic conditions. Domínguez Benavides and Lorenzo Ramírez \cite{Benavides2022Fixed} observed, in particular, that previous arguments for mappings of asymptotically nonexpansive type in spaces with uniform normal structure actually yield the existence of an orbit ($\{T^n x\}$) converging to the point itself ($x$), which makes it possible to obtain fixed point results for pointwise classes without requiring the global continuity of an iterate. The authors advanced in this direction by establishing fixed point results without the hypothesis of the existence of a continuous iterate. In particular, they proved that every pointwise asymptotically nonexpansive mapping defined on a nonempty, closed, bounded, and convex subset of a nearly uniformly convex (NUC) Banach space has a fixed point. In addition, they showed that pointwise eventually nonexpansive mappings have a fixed point when the asymptotic centers of sequences in the domain are compact.

On the other hand, in the context of H\"older estimates, Barroso and Ferreira \cite{BarrosoFerreira} subsequently used
retraction methods to construct fixed-point-free mappings with
null minimal displacement on unit balls and studied, in particular,
uniform H\"older estimates for the iterates with a fixed exponent
$\alpha\in(0,1)$. We recommend the works \cite{Benavides2022Fixed} --
\cite{espinola2024orbital} for a deeper treatment of this line of research.

More recently, Barroso and da Silva
\cite{BarrosoSilva} moved from a fixed exponent to sequences
$(\alpha_n)\subset(0,1]$ satisfying $\alpha_n\to1$. Their work introduced and
studied mappings of asymptotically H\"older nonexpansive type,
established fixed point results under the condition $\varepsilon_0(X)<1$, and
presented examples showing that the resulting classes genuinely extend
their classical counterparts.

\subsection{Objective}

The objective of this article is to develop, in the H\"older setting, a counterpart of the
fixed point results of Casini--Maluta, Lim--Xu, and Kim--Xu. More
precisely, we study the following classes of mappings:

\begin{definition}\label{def:holder-classes}\cite{BarrosoSilva}
Let $K$ be a nonempty subset of a Banach space $X$, and let
$T\colon K\to K$.
\begin{enumerate}
    \item[(i)] The mapping $T$ is called \emph{uniformly asymptotically
    H\"older $\gamma$-Lipschitzian}, where $\gamma\geq 1$, if there exists a
    sequence $(\alpha_n)\subset(0,1]$, with $\alpha_n\to1$, such that
    \begin{equation}
        \|T^n x-T^n y\|
        \leq
        \gamma\|x-y\|^{\alpha_n}
        \label{eq:uniform-holder}
    \end{equation}
    for all $x,y\in K$ and every $n\geq1$.

    \item[(ii)] The mapping $T$ is called \emph{asymptotically H\"older
    nonexpansive} if there exist sequences $(k_n)\subset[0,\infty)$ and
    $(\alpha_n)\subset(0,1]$, with $k_n\to0$ and $\alpha_n\to1$, such that
    \begin{equation}
        \|T^n x-T^n y\|
        \leq
        (1+k_n)\|x-y\|^{\alpha_n}
        \label{eq:asymptotically-holder}
    \end{equation}
    for all $x,y\in K$ and every $n\geq1$.
\end{enumerate}
\end{definition}

It is important to emphasize that the class of uniformly asymptotically H\"older $\gamma$-Lipschitz mappings strictly contains the class of uniformly $\gamma$-Lipschitz mappings. Indeed, if $\alpha_n=1$ for every $n$, definitions (i) and (ii) above reduce, respectively, to the classical notions of uniformly $\gamma$-Lipschitz mapping and asymptotically nonexpansive mapping. On the other hand, one can construct mappings that are not even Lipschitzian but satisfy condition (i) in \ref{def:holder-classes}, as we shall see in Example \ref{ex:non-lips}.

\subsection{Main results}
The first result of this work is an extension of the Casini--Maluta Theorem \cite[Theorem 3.1]{CM} to the class of uniformly asymptotically H\"older $\gamma$-Lipschitz mappings.

\begin{theorem}[Main Result]\label{thm:main1}
Let $X$ be a Banach space with uniform normal structure, and let $K$ be a
nonempty, closed, bounded, and convex subset of $X$. If $T\colon K\to K$
is a uniformly asymptotically H\"older
$\gamma$-Lipschitzian mapping and
\[
1\leq\gamma<\sqrt{N(X)},
\]
then $T$ has a fixed point.
\end{theorem}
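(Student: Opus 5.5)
The plan is to adapt the Casini--Maluta iteration scheme. We build a sequence of points $z_0,z_1,\dots$ in $K$ whose asymptotic radii with respect to suitable orbits decrease geometrically. For a bounded sequence $\{x_n\}$ and a point $y$, set $r(y,\{x_n\})=\limsup_n\|x_n-y\|$. We write $r(C,\{x_n\})$ for the asymptotic radius over a set $C$ and $\mathrm{diam}_a(\{x_n\})=\lim_k\sup\{\|x_n-x_m\|:n,m\ge k\}$ for the asymptotic diameter.

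The basic tool is the known sequential form of the Bynum coefficient, which holds because $X$ is reflexive by Maluta's result. For every bounded sequence $\{x_n\}$, the closed convex hull $\overline{\mathrm{co}}\{x_n\}$ contains a point $z$ with
\[
r(z,\{x_n\})\le \widetilde N(X)\, D(\{x_n\}),
\]
where $D(\{x_n\})=\limsup_n\limsup_m\|x_n-x_m\|$ is the quantity of Jiménez-Melado.

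Fix $z_0\in K$. Given $z_j$, consider the orbit $x_n=T^nz_j$ and choose $z_{j+1}\in K$ as above, so that $r(z_{j+1},\{T^nz_j\})\le \widetilde N(X)\,D(\{T^nz_j\})$. Next estimate $D(\{T^nz_j\})$. For $m>n$,
\[
\|T^mz_j-T^nz_j\|=\|T^n(T^{m-n}z_j)-T^nz_j\|\le\gamma\|T^{m-n}z_j-z_j\|^{\alpha_n}.
\]
Let $n\to\infty$ first and then $m\to\infty$. Boundedness of $K$ and $\alpha_n\to1$ give $\|T^{m-n}z_j-z_j\|^{\alpha_n}-\|T^{m-n}z_j-z_j\|\to0$ uniformly, so the Hölder exponent disappears in the limit. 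This, however, controls the double limit in the wrong order: with $m$ fixed and $n\to\infty$, the difference $m-n$ is negative. This is the main obstacle. The abstract's hint is that one passes to a subsequence $\{T^{n_k}z_j\}$ whose asymptotic diameter is bounded by $D(\{T^nz_j\})$ while asymptotic radii at every point of $\overline{\mathrm{span}}\{T^nz_j\}$ are preserved. Taking $n_k$ so that the iterated limits exist and become comparable (a Ramsey/diagonal selection) gives
\[
\mathrm{diam}_a(\{T^{n_k}z_j\})\le\gamma\, r(z_j,\{T^nz_j\})
\]
after the exponent is removed. I would prove this subsequence lemma first, using separability of the closed span and a diagonal argument so that radii agree at a countable dense set and hence everywhere, by the $1$-Lipschitz dependence on the center. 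Then apply the Bynum estimate to the subsequence.

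The outcome is $r(z_{j+1},\{T^nz_j\})\le\widetilde N\gamma\, r_j$, where $r_j:=r(z_j,\{T^nz_j\})$. Now compare with the orbit of $z_{j+1}$. For each $n$ and $m$,
\[
\|T^{n+m}z_j-T^mz_{j+1}\|\le \gamma\|T^nz_j-z_{j+1}\|^{\alpha_m}.
\]
Let $m\to\infty$ first and then take $\limsup_n$; this yields $r_{j+1}\le\gamma\, r(z_{j+1},\{T^nz_j\})$. Hence $r_{j+1}\le\gamma^2\widetilde N(X)\,r_j=:h\,r_j$ with $h<1$. We also have $\|z_{j+1}-z_j\|\le r(z_{j+1},\{T^nz_j\})+r_j\le(1+\gamma h)r_j$ up to the limiting error, so $\{z_j\}$ is Cauchy and converges to some $z\in K$.

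Finally, from $\|T^nz-z\|\le\|T^nz-T^nz_j\|+\|T^nz_j-z_j\|+\|z_j-z\|\le\gamma\|z-z_j\|^{\alpha_n}+\|T^nz_j-z_j\|+\|z_j-z\|$ we get $\limsup_n\|T^nz-z\|\le\gamma\|z-z_j\|+r_j+\|z_j-z\|\to0$, so $T^nz\to z$. Continuity of $T$ follows from the case $n=1$ of the Hölder condition, and therefore $Tz=\lim T^{n+1}z=z$. The delicate points, to be checked throughout, are the Hölder remainders: all exponents enter only through $t^{\alpha_n}-t\to0$ uniformly on $[0,\mathrm{diam}K]$. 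The degenerate case $r_j=0$ already gives a fixed point.
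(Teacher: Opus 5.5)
There is a genuine gap in the step $r_{j+1}\le\gamma\,r(z_{j+1},\{T^nz_j\})$. Write $\rho=r(z_{j+1},\{T^nz_j\})$. Your inequality $\|T^{n+m}z_j-T^mz_{j+1}\|\le\gamma\|T^nz_j-z_{j+1}\|^{\alpha_m}$ only compares the orbit of $z_{j+1}$ with the shifted orbit of $z_j$. Letting $m\to\infty$ and then $n\to\infty$ bounds $\limsup_n\limsup_m\|T^{n+m}z_j-T^mz_{j+1}\|$, a quantity in which the point $z_{j+1}$ never appears. So it says nothing about $r_{j+1}=\limsup_m\|T^mz_{j+1}-z_{j+1}\|$.

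If you insert $z_{j+1}$ by the triangle inequality, $\|T^mz_{j+1}-z_{j+1}\|\le\|T^mz_{j+1}-T^{n+m}z_j\|+\|T^{n+m}z_j-z_{j+1}\|$, you only get $r_{j+1}\le(1+\gamma)\rho$. The resulting factor is $(1+\gamma)\gamma\widetilde N(X)\ge 2/N(X)\ge 1$, since $N(X)\le 2$ always, and the iteration never contracts.

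The missing ingredient is the second conclusion of the Casini--Maluta lemma, which the paper keeps as Lemma~\ref{lem:casini}(ii): the point can be chosen with $\|z-y\|\le r_a(\{x_n\},y)$ for every $y\in X$. This holds for points of $\bigcap_k\overline{\operatorname{conv}}\{x_n:n\ge k\}$. It fails for an arbitrary point of $\overline{\operatorname{conv}}\{x_n\}$, which is all your stated tool provides. The property transfers from the subsequence $\{T^{n_k}z_j\}$ to the whole orbit for free, because passing to a subsequence can only decrease $r_a$.

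With it, fix $m$, take $y=T^mz_{j+1}$, and let $n\to\infty$ first: $\|z_{j+1}-T^mz_{j+1}\|\le\limsup_n\|T^{n+m}z_j-T^mz_{j+1}\|\le\gamma\rho^{\alpha_m}$. Only then take $\limsup_m$ to obtain $r_{j+1}\le\gamma\rho$. This is the opposite order from the one you prescribe.

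Your diagnosis of the ``main obstacle'' is also off. With your own convention $D=\limsup_n\limsup_m$, fix the smaller index $n$ and let $m\to\infty$. This gives $\limsup_m\|T^mz_j-T^nz_j\|\le\gamma r_j^{\alpha_n}$, so $D(\{T^nz_j\})\le\gamma r_j$ follows at once, with no subsequence needed. The real obstacle is that the Casini--Maluta estimate controls the radius by $\mathsf{d}_a$, not by $D$. So the $D$-form you call known is exactly what the subsequence lemma must prove.

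Your sketch of that lemma does match the paper's Lemma~\ref{lem:extraction} and Lemma~\ref{lem:casini}. It uses a dense sequence in the closed span, a diagonal selection, the $1$-Lipschitz dependence of $r_a$ on the center, then Casini--Maluta on the subsequence and transfer of the radius back at $z_{j+1}\in\overline{\operatorname{span}}\{T^nz_j\}$. Your Cauchy step and the final continuity argument are fine once the contraction $r_{j+1}\le\gamma^2\widetilde N(X)\,r_j$ is secured.
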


The proof of Theorem \ref{thm:main1} cannot be obtained from a mere direct adaptation of the proof given by Casini--Maluta in \cite[Theorem 3.1]{CM}. Indeed, if $n>m,$ then
$$
\|T^nx-T^mx\|\leq\gamma\|T^{n-m}x-x\|^{\alpha_m}.
$$
The problem arises when trying to control directly the asymptotic diameter of the sequence of iterates $\{T^nx\}$
$$
\mathsf d_a(\{T^nx\})=\lim_{k\to\infty}\sup_{n,m\geq k}\|T^nx-T^mx\|.
$$
Even though $m\to\infty$ implies that $\alpha_m\to 1,$ the condition $n,m\geq k$ does not guarantee that $n-m\to \infty.$ For example, we may take $n=m+1$ for every $m$. Thus, the term $\|T^{n-m}x-x\|$ may involve small indices and, consequently, cannot be controlled by the asymptotic radius
$$
r_a(\{T^jx\},x)=\limsup_{j\to\infty}\|T^jx-x\|.
$$

To overcome this difficulty, it was necessary to consider the quantity
$$
    D(\{T^nx\})
    =
    \limsup_{m\to\infty} \limsup_{n\to\infty} \| T^nx - T^mx \|.
$$
Observe that the order of the limits is essential in our case. For each fixed $m$, we have $n-m\to\infty$ as $n\to\infty.$
With this definition, we have

\begin{align*}
    \limsup_{n\to\infty}\|T^nx-T^mx\|&\leq\gamma\limsup_{n\to\infty}\|T^{n-m}x-x\|^{\alpha_m}\\ 
    &=\gamma\left(\limsup_{n\to\infty}\|T^{n-m}x-x\|\right)^{\alpha_m}.
\end{align*}
Taking the upper limit as $m\to\infty$ and using $\alpha_m\to1$, it follows that
$$
D(\{T^n x\}) \leq \gamma\limsup_{m\to\infty}r_a(\{T^n x\},x)^{\alpha_m} = \gamma r_a(\{T^n x\},x).
$$

However, the Casini--Maluta lemma \cite[Lemma 3.1]{CM} cannot be used here, since it provides a control for the asymptotic radius in terms of $\mathsf d_a$, and not in terms of $D$. Thus, it was necessary to develop a new technique allowing us to obtain this estimate. We show that every
bounded sequence $\{x_n\}$ admits a subsequence $\{x_{n_j}\}$ whose
asymptotic diameter is bounded by
$$
D(\{x_n\})=\limsup_{m\to\infty}\limsup_{n\to\infty}\|x_n-x_m\|,
$$
while preserving, at the same time,
the asymptotic radius of $\{x_n\}$ at every point of $Y=\overline{\operatorname{span}}\{x_n:n\in\mathbb N\}.$

Combining this new technical subsequence lemma with the original Casini--Maluta lemma, we obtain a point $z\in\overline{\operatorname{conv}}\{x_n:n\in\mathbb N\}$ such that
$$
    r_a(\{x_n\},z)
    \leq
    \widetilde N(X)D(\{x_n\})
    \quad \text{and}\quad 
    \|z-y\|
    \leq
    r_a(\{x_n\},y),
    \quad \forall y\in X.
$$
With this version of the Casini--Maluta lemma, now in terms of $D$, it was possible to prove Theorem \ref{thm:main1}.

Next, we obtain the corresponding H\"older extension of the Lim--Xu theorem \cite[Theorem 1]{LimXu}.

\begin{theorem}\label{thm:main02}
Let $X$ be a Banach space with uniform normal structure, let $K$ be a
nonempty and bounded subset of $X$, and let $T\colon K\to K$ be a
uniformly asymptotically H\"older $\gamma$-Lipschitzian mapping with
\[
1\leq\gamma<\sqrt{N(X)}.
\]
Suppose that there exists a nonempty, closed, bounded, and convex subset
$E\subset K$ such that
\begin{equation*}\tag{P}\label{prop:P}
    x\in E
    \quad\Longrightarrow\quad
    \omega_w(x)\subset E,
\end{equation*}
where the weak $\omega$-limit set of $T$ at $x$ is defined by
\[
    \omega_w(x)
    =
    \left\{
        y\in X:
        y=\operatorname{weak\text{-}lim}_{j\to\infty}T^{n_j}x
        \text{ for some sequence } n_j\to\infty
    \right\}.
\]
Then $T$ has a fixed point in $E$.
\end{theorem}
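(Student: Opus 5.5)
The plan is to follow the Lim--Xu scheme, with the $D$-version of the Casini--Maluta lemma announced in the introduction replacing the original one. That lemma says: for a bounded sequence $\{x_n\}$ there is $z\in\overline{\operatorname{conv}}\{x_n\}$ with $r_a(\{x_n\},z)\le \widetilde N(X)D(\{x_n\})$ and $\|z-y\|\le r_a(\{x_n\},y)$ for all $y\in X$. Since $N(X)>1$, Maluta's result makes $X$ reflexive. Hence every sequence $\{T^nx\}$ with $x\in E$ has weak cluster points, and by (P) all of them lie in $E$.

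\textbf{Step 1 (iteration).} Fix $x_0\in E$. Given $x_k\in E$, apply the $D$-Casini--Maluta lemma to $\{T^nx_k\}$ to get $z$. To keep the new point inside $E$ we need more than $z\in\overline{\operatorname{conv}}\{T^nx_k\}$, since $E$ need not contain the orbit. I would use the stronger form of the lemma, in which $z$ arises as an asymptotic center of a subsequence. Following Lim--Xu, the point should be a weak limit of convex combinations of the tail $\{T^nx_k: n\ge m\}$ for every $m$. Such a point lies in $\bigcap_m\overline{\operatorname{conv}}\{T^nx_k:n\ge m\}$. That intersection is weakly compact, and I would argue it sits inside $\overline{\operatorname{conv}}\,\omega_w(x_k)\subset E$, using that $E$ is closed and convex. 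Call this point $x_{k+1}\in E$. This is the step I expect to be the main obstacle: checking that the subsequence construction behind the lemma yields a point in the weak-$\omega$-limit hull rather than merely in the closed convex hull of the orbit. I would first try choosing $z$ as a minimizer of $r_a(\{T^{n_j}x_k\},\cdot)$ over $\overline{\operatorname{conv}}\,\omega_w(x_k)$, and then re-derive the radius estimate through Bynum's coefficient applied to that set.

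\textbf{Step 2 (estimates).} Put $r_k=r_a(\{T^nx_k\},x_{k+1})$ and $\rho_k=r_a(\{T^nx_k\},x_k)$. The introduction's computation gives $D(\{T^nx_k\})\le\gamma\rho_k$, so $r_k\le\widetilde N(X)\gamma\rho_k$. Next, for every $y$ and $m$ we have $\|x_{k+1}-y\|\le r_a(\{T^nx_k\},y)$. Take $y=T^mx_{k+1}$ and use uniform Hölder continuity:
\[
\|T^nx_k-T^mx_{k+1}\|\le \gamma\|T^{n-m}x_k-x_{k+1}\|^{\alpha_m}.
\]
Letting $n\to\infty$ bounds $\|x_{k+1}-T^mx_{k+1}\|$ by $\gamma r_k^{\alpha_m}$. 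Then $m\to\infty$ gives $\rho_{k+1}\le\gamma r_k\le \gamma^2\widetilde N(X)\rho_k=:A\rho_k$ with $A<1$. The same inequality, written as $\|x_{k+1}-x_k\|\le\|x_{k+1}-T^mx_k\|+\|T^mx_k-x_k\|$ with a limsup in $m$, gives $\|x_{k+1}-x_k\|\le r_k+\rho_k\le(1+A)\rho_k$. Here boundedness of $K$ keeps every quantity finite.

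\textbf{Step 3 (conclusion).} From Step 2, $\rho_k\le A^k\rho_0$, so $\{x_k\}$ is Cauchy and converges to some $x\in E$, using that $E$ is closed. For every $m$,
\[
\|T^mx-x\|\le\|T^mx-T^mx_k\|+\|T^mx_k-x_k\|+\|x_k-x\|\le\gamma\|x-x_k\|^{\alpha_m}+\|T^mx_k-x_k\|+\|x_k-x\|.
\]
Take limsup in $m$: this gives $r_a(\{T^mx\},x)\le\gamma\|x-x_k\|+\rho_k+\|x_k-x\|\to0$, so $T^mx\to x$. Finally, continuity of $T$ follows from \eqref{eq:uniform-holder} with $n=1$, since $\|Tx-Ty\|\le\gamma\|x-y\|^{\alpha_1}$. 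Therefore $Tx=\lim T^{m+1}x=x$.
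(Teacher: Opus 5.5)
Your Steps 2 and 3 are sound and coincide with the iteration in the paper's proof of Theorem~\ref{thm:main1}; your finish via $T^mx\to x$ plus continuity of $T$ is a slightly cleaner variant. The gap is Step 1, which you leave open, and it is the only place where property (P) and the non-closed, non-convex $K$ matter. Lemma~\ref{lem:casini} as stated only gives $z\in\overline{\operatorname{conv}}\{T^nx_k:n\in\mathbb N\}$. That set need not meet $E$, or even lie in $K$, so without more you can neither place $x_{k+1}$ in $E$ nor apply $T$ to it. Your first idea, that the lemma's point already lies in every tail hull, is asserted rather than proved and would require reopening the Casini--Maluta construction. Your fallback does not work as described either. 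Bynum's coefficient applied to $A=\overline{\operatorname{conv}}\,\omega_w(x_k)$ controls $\sup_{a\in A}\|a-c\|$ for a Chebyshev center $c$ of $A$. What you need is the asymptotic radius $\limsup_n\|T^nx_k-c\|$ of the orbit, which lies outside $A$. Weak lower semicontinuity only yields $\|a-c\|\le\liminf_j\|T^{n_j}x_k-c\|$, which is the wrong direction. Minimizing $r_a$ over $A$ helps only if some point of $A$ is already known to satisfy the radius estimate, and that is exactly what has to be shown.

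The missing idea uses only the lemma as a black box: apply Lemma~\ref{lem:casini} to each tail $(T^nx_k)_{n\ge m}$. Since $r_a$ and $D$ are unchanged when finitely many terms are dropped, the resulting points $y_m\in C_m:=\overline{\operatorname{conv}}\{T^nx_k:n\ge m\}$ all satisfy $r_a(\{T^nx_k\},y_m)\le\widetilde N(X)D(\{T^nx_k\})$ and $\|y_m-y\|\le r_a(\{T^nx_k\},y)$ for every $y\in X$. By reflexivity, a subsequence $y_{m_j}$ converges weakly to some $z$. The $C_m$ are nested and weakly closed, so $z\in\bigcap_m C_m=\overline{\operatorname{conv}}\,\omega_w(x_k)\subset E$, using Bruck's identity, property (P), and the fact that $E$ is closed and convex. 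Both estimates pass to $z$, because $y\mapsto r_a(\{T^nx_k\},y)$ and $u\mapsto\|u-y\|$ are convex and continuous, hence weakly lower semicontinuous. Setting $x_{k+1}:=z$, your Steps 2 and 3 then go through verbatim.
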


Combining Theorem \ref{thm:main02} with a minimal-set argument,
we obtain a H\"older version of the Kim--Xu theorem \cite[Theorem 1]{KimXu}.

\begin{theorem}\label{thm:main03}
Let $X$ be a Banach space with uniform normal structure, let $K$ be a
nonempty, closed, bounded, and convex subset of $X$, and let
$T\colon K\to K$ be an asymptotically H\"older nonexpansive mapping. Then
$T$ has a fixed point.
\end{theorem}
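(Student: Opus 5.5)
The plan is to reduce Theorem \ref{thm:main03} to Theorem \ref{thm:main02} through a minimal-set argument, following the scheme of Kim--Xu. First, since $k_n\to0$, for every $\gamma$ with $1<\gamma<\sqrt{N(X)}$ (possible because $N(X)>1$) there is $n_0$ such that $1+k_n\le\gamma$ for $n\ge n_0$. Moreover, uniform normal structure implies reflexivity \cite{maluta1984uniformly}, so $K$ is weakly compact. The difficulty is that $T$ itself is only asymptotically H\"older with constant $1+k_n$, and the early iterates may have large constants. To deal with this, I will replace $T$ by a power $S=T^{p}$ with $p\ge n_0$. This is not quite enough, since $S^n=T^{pn}$ gives $\|S^nx-S^ny\|\le(1+k_{pn})\|x-y\|^{\alpha_{pn}}\le\gamma\|x-y\|^{\alpha_{pn}}$. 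Hence $S$ is uniformly asymptotically H\"older $\gamma$-Lipschitzian with exponents $\alpha_{pn}\to1$.

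Next comes the minimal-set step. Consider the family of nonempty closed convex subsets $E\subset K$ satisfying property (P) with respect to $T$, namely $x\in E\Rightarrow\omega_w(x)\subset E$, where $\omega_w$ is taken for $T$. Weak compactness and Zorn's lemma (intersections of chains are nonempty and retain (P)) give a minimal such $E$. The plan is then to apply Theorem \ref{thm:main02} to $S$ on a suitable invariant set. The weak $\omega$-limit set of $S$ at $x$ is contained in that of $T$, because $T^{pn_j}x$ is a subsequence of the orbit. Therefore (P) for $T$ implies (P) for $S$ on $E$. Theorem \ref{thm:main02} is stated for $S\colon K\to K$ with $E\subset K$, so it applies and yields a fixed point of $T^{p}$ in $E$ for each $p\ge n_0$.

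Finally, the fixed points of the powers must be upgraded to a fixed point of $T$. Take $p$ and $p+1$, both at least $n_0$. This gives $x_p$ with $T^px_p=x_p$, but not yet a common fixed point. I would instead rerun the argument on the set $F_p=\{x\in E: T^px=x\}$. The key estimate is that for $x\in F_p$,
\[
\|T^{mp+1}x - x\| = \|T^{mp}(Tx) - T^{mp}x\| \le (1+k_{mp})\|Tx-x\|^{\alpha_{mp}}.
\]
The left side equals $\|Tx-x\|$ for all $m$. This gives only the trivial bound as $m\to\infty$, so it is not the right approach. The cleaner route is the Kim--Xu approach. Working in the minimal set $E$, define $f(x)=\limsup_n\|T^nx-x\|$ and run the Casini--Maluta-type iteration from the proof of Theorem \ref{thm:main02} directly for $T$. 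Using $D(\{T^nx\})\le\limsup_m(1+k_m)\,r_a(\{T^nx\},x)^{\alpha_m}=r_a(\{T^nx\},x)$, the effective constant is $1$, which is below $\sqrt{N(X)}$. The $D$-version of the Casini--Maluta lemma then produces $z\in E$ (by (P) and minimality) with $r_a(\{T^nx\},z)\le\widetilde N(X)\,r_a(\{T^nx\},x)$ and $\|z-y\|\le r_a(\{T^nx\},y)$. Iterating gives a Cauchy sequence whose limit $u$ satisfies $T^nu\to u$. Continuity of $T$ (it is H\"older with $n=1$) then gives $Tu=u$.

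I expect the main obstacle to be this last step, namely checking that the proof of Theorem \ref{thm:main02} only uses $\limsup_n$ of the Lipschitz-type constants, so that $\gamma$ can be replaced by $\limsup(1+k_n)=1$. A related point is that the passage from $T^nu\to u$ to $Tu=u$ uses only continuity of $T$, which holds here.
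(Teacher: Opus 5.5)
Your final route is correct, but it is genuinely different from the paper's.

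\textbf{The paper's route.} The paper uses exactly your first half. It passes to $T^N$ with $1+k_n<\gamma<\sqrt{N(X)}$ for $n\ge N$, takes a Zorn-minimal closed convex $E$ with property (P), and gets $z\in E$ with $T^Nz=z$ from Theorem~\ref{thm:main02}. It then performs the upgrade you abandoned with a different device. The finite orbit $O=\{z,Tz,\dots,T^{N-1}z\}$ lies in $\omega_w(z)\subset E$ and satisfies $T(O)=O$. If $O$ had positive diameter, a nondiametral point of its compact convex hull would give a proper closed convex subset $\{w\in E:\sup_{y\in O}\|w-y\|\le d\}$ of $E$. That subset still satisfies (P), by $T^n(O)=O$ and $(1+k_n)d^{\alpha_n}\to d$, contradicting minimality.

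\textbf{Your route.} You rerun the $D$-version of the Casini--Maluta iteration for $T$ itself. The check you flag as the main obstacle does go through. The H\"older estimate enters in only two places:
\begin{itemize}
\item $\limsup_n\|T^nx-T^mx\|\le(1+k_m)\,d(x)^{\alpha_m}$, followed by $m\to\infty$;
\item $\|z-T^Nz\|\le r_a(\{T^nx\},T^Nz)\le(1+k_N)\,r_a(\{T^nx\},z)^{\alpha_N}$, followed by $N\to\infty$. This is the step your sketch leaves implicit and should write out.
\end{itemize}
Together these give $d(z)\le r_a(\{T^nx\},z)\le\widetilde N(X)\,d(x)$, with factor $\widetilde N(X)<1$. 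The fixed-index constant $1+k_1$ is needed only for continuity at the end.

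\textbf{What each buys.} Your approach gives a shorter, constructive proof. In it the power $T^p$, the minimal set, property (P), reflexivity and Bruck's identity are all superfluous. Your parenthetical ``by (P) and minimality'' is off: minimality plays no role in locating $z$. Since $K$ is itself closed, convex and $T$-invariant, Lemma~\ref{lem:casini} already gives $z(x)\in\overline{\operatorname{conv}}\{T^nx\}\subset K$. Your argument also shows that Theorem~\ref{thm:main1} only needs the $\limsup$ of the H\"older constants to lie below $\sqrt{N(X)}$. The paper's route is chosen to mirror the Kim--Xu scheme and to showcase Theorem~\ref{thm:main02}, at the cost of that extra machinery.

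In a write-up, drop the $T^p$ and $F_p$ material entirely and work directly in $K$.
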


Thus, the contribution of this work comprises two complementary parts. The
first is the subsequence principle and the resulting version, in terms of $D$,
of the Casini--Maluta lemma; the second consists of the application of this technique
to obtain H\"older-type extensions of the Casini--Maluta, Lim--Xu,
and Kim--Xu theorems.

\subsection{Additional comments}

Theorems~\ref{thm:main1}, \ref{thm:main02}, and \ref{thm:main03} recover the
results of Casini--Maluta, Lim--Xu, and Kim--Xu, respectively, when
$\alpha_n=1$ for every $n$. The examples presented below show that the
classes considered here contain genuinely non-Lipschitzian mappings and
that the condition $\alpha_n\to1$ cannot, in general, be replaced by a
fixed exponent $\alpha<1$. In addition, since $\varepsilon_0(X)<1$ implies
uniform normal structure, Theorem~\ref{thm:main03} extends the fixed point
conclusion of Barroso and da Silva \cite{BarrosoSilva} to a broader class
of Banach spaces.

The paper is organized as follows. Section
\ref{sec:preliminares} contains the remaining definitions and preliminary lemmas
used throughout the paper. Section \ref{sec:provas} is devoted to the proofs of our main results. Section \ref{sec:exemplos} presents examples illustrating the usefulness
of Lemma \ref{lem:extraction}, the scope of the classes of mappings considered,
and the role of the condition $\alpha_n\to 1$.


\section{Preliminaries}\label{sec:preliminares}

To properly contextualize the purposes of this work, we begin this section by recalling some fundamental geometric notions and notation relevant to the underlying topic. Given subsets $A$ and $B$ of $X$, we recall that: 
\begin{itemize}\setlength\itemsep{2mm}
\item $\mathsf{d}(A) = \sup\{ \|x - y\| \colon x,y \in A\}$ is the diameter of $A$.

\item For each $x \in X$, $r(A,x) = \sup\{ \|x - y\| \colon y \in A\}$ is the Chebyshev radius of $A$ centered at $x$.

\item $r(A,B) = \inf\{ r(A,x) \colon x \in B\}$ is the Chebyshev radius of $A$ relative to $B$.

\item $r(A) = r(A,A)$ is the Chebyshev radius of $A$.
\end{itemize}
Furthermore, for a bounded sequence $\{x_n\}\subset X$, we recall the following notions:

\begin{itemize}\setlength\itemsep{2mm}
    \item $\mathsf{d}_a(\{x_n\})=\displaystyle\lim_{k\to\infty}\sup\{\|x_n-x_m\|: n,m\geq k\}$ is the asymptotic diameter;

    \item $r_a(\{x_n\},x)=\displaystyle\limsup_{n\to\infty}\|x_n-x\|$ is the asymptotic radius of the sequence relative to the point $x\in X$;

    \item $r_a(\{x_n\},A)=\inf\{r_a(\{x_n\},x):\,x\in A\}$ is the asymptotic radius of the sequence relative to the set $A$;
\end{itemize}

\noindent Finally, the following quantity will play a fundamental role in our approach. It was introduced by Jimenez-Melado in \cite{Jimenez}: 

\begin{definition} For a bounded sequence  $\{x_n\}$ in a Banach space $X$, define:

\[
D(\{x_n\})=\limsup_{m\to\infty}\limsup_{n\to\infty}\|x_n-x_m\|.
\]
\end{definition}
\begin{remark}It is easy to see that $D(\{x_n\})$ is well defined and finite. Indeed, if $\|x_n\|\leq M$ for every $n\in\mathbb N$, then $0 \leq D(\{x_n\}) \leq 2M.$
\end{remark}

The relevance of this object lies in the order of the limits in its definition. Notice that the upper limit as $n\to\infty$ gives the asymptotic radius of the sequence relative to one of its own points $x_m$, followed by the upper limit as $m\to\infty$, which is precisely the index that orders the sequence of H\"older exponents $(\alpha_m)$.

The following Lemma \ref{lem:extraction} is the main ingredient that allows us to adjust the Casini--Maluta Lemma \cite[Lemma 3.1]{CM}.
\begin{lemma}\label{lem:extraction}
If $\{x_n\}_{n\in\N}$ is a bounded sequence in $X$ and $$Y=\overline{\mathrm{span}}\{x_n;\;n\in\N\},$$ then there exists a subsequence $\{x_{n_j}\}$ such that:
\begin{itemize}
    \item [i)]$\mathsf{d}_a(\{x_{n_j}\})\leq D(\{x_n\})$
    \item [ii)]$r_a(\{x_{n_j}\},y)=r_a(\{x_n\},y),\, \quad\forall y\in Y.$
\end{itemize}
\end{lemma}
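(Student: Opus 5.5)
The plan is to build the subsequence $\{x_{n_j}\}$ by a single inductive diagonal construction. Each new index will satisfy three requirements: one controlling distances to all earlier chosen terms, one controlling the behaviour of the new term itself, and one approximately realizing the asymptotic radius at a point of a countable dense subset of $Y$. Write $D=D(\{x_n\})$ and $r(y)=r_a(\{x_n\},y)$. Since $Y=\overline{\mathrm{span}}\{x_n\}$ is separable, fix a countable dense set $\{y_k\}_{k\in\N}\subset Y$. Also fix an enumeration $j\mapsto k(j)$ of $\N$ in which every $k$ occurs infinitely often, for instance $1,1,2,1,2,3,\dots$.

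First I record two elementary facts.
\begin{itemize}
\item[(a)] Put $a_m=\limsup_{n\to\infty}\|x_n-x_m\|$. Since $\limsup_m a_m=D$, for each $\varepsilon>0$ the set $\{m: a_m<D+\varepsilon\}$ is cofinite.
\item[(b)] If $a_m<D+\varepsilon$, then $\{n:\|x_n-x_m\|> D+\varepsilon\}$ is finite.
\end{itemize}
Finally, by the definition of $\limsup$, for each $k$ and $\eta>0$ the set $\{n:\|x_n-y_k\|>r(y_k)-\eta\}$ is infinite.

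Now choose $n_1<n_2<\cdots$ inductively. At stage $j$, assume $n_1<\dots<n_{j-1}$ have been chosen with $a_{n_i}<D+1/i$. Pick $n_j>n_{j-1}$ satisfying three conditions:
\begin{itemize}
\item[(1)] $\|x_{n_j}-x_{n_i}\|\le D+1/i$ for all $i<j$;
\item[(2)] $a_{n_j}<D+1/j$;
\item[(3)] $\|x_{n_j}-y_{k(j)}\|>r(y_{k(j)})-1/j$.
\end{itemize}
By (b) and the inductive hypothesis, condition (1) excludes only finitely many $n$. By (a), condition (2) holds on a cofinite set. Condition (3) holds on an infinite set. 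So the admissible indices form an infinite set minus finitely many, and $n_j$ exists. This compatibility check is the only real obstacle. It is exactly where the order of limits in $D$ is used: requiring $a_{n_i}<D+1/i$ at earlier stages forces all but finitely many later terms to stay within $D+1/i$ of $x_{n_i}$.

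To verify i), take $i<j$ with both at least $k$. Condition (1) gives $\|x_{n_j}-x_{n_i}\|\le D+1/i\le D+1/k$. Hence $\sup\{\|x_{n_i}-x_{n_j}\|:i,j\ge k\}\le D+1/k$, and letting $k\to\infty$ yields $\mathsf d_a(\{x_{n_j}\})\le D$. For ii), the inequality $r_a(\{x_{n_j}\},y)\le r(y)$ holds trivially for every $y$, since passing to a subsequence can only decrease a $\limsup$. Conversely, for fixed $k$, condition (3) holds for the infinitely many $j$ with $k(j)=k$, so $r_a(\{x_{n_j}\},y_k)\ge r(y_k)$. Equality therefore holds on the dense set $\{y_k\}$. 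Both $y\mapsto r_a(\{x_{n_j}\},y)$ and $y\mapsto r(y)$ are $1$-Lipschitz, so equality extends to all of $Y$ by density, completing the proof.
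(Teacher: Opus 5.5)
Your proof is correct and follows essentially the same route as the paper: a diagonal construction over a dense sequence in $Y$ in which each index occurs infinitely often, with each new term chosen far enough out to stay within $D+\varepsilon_i$ of every earlier chosen term, followed by the same $1$-Lipschitz density argument for ii). The differences are only bookkeeping. The paper encodes the choices with explicit thresholds $M_j$ and $p_j$ (so $a_{n_j}<D+\varepsilon_j/2$ comes from $n_j\geq M_j$), whereas you impose $a_{n_j}<D+1/j$ directly as a condition and justify the existence of $n_j$ by a finite/cofinite/infinite counting argument.
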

\begin{proof}
Define \(R\colon X\to[0,\infty)\) by
\[
    R(y)
    =
    \limsup_{n\to\infty}\|x_n-y\|
    =
    r_a((x_n),y).
\]
First, note that since $\{x_n\}$ is a bounded sequence, we have $R(y) < \infty$ for all $y \in X$. For simplicity, write
$$
    D:=D(\{x_n\})=\limsup_{m\to\infty}R(x_m).
$$
Since $Y=\overline{\operatorname{span}}\{x_n:n\in\mathbb N\}$ is separable, there exists a dense sequence
$\{y_q\}_{q\in\mathbb N}$ in $Y$. Choose a strictly decreasing
sequence $(\varepsilon_j)$ of positive numbers such that
$\varepsilon_j\to0$, and a sequence $(q_j)$ in $\mathbb N$ such
that every positive integer occurs infinitely many times.
By the definition of $D$, for every $j\in\mathbb N$, there exists
$M_j\in\mathbb N$ such that
$$
    m \geq M_j \Rightarrow R(x_m)<D+\frac{\varepsilon_j}{2}.
$$
We first construct \(n_1\) and \(p_1\). Since
$R(y_{q_1})=\limsup_{n\to\infty}\|x_n-y_{q_1}\|,$
there exist arbitrarily large \(n\in\mathbb N\) such that
\[
    \|x_n-y_{q_1}\|
    >
    R(y_{q_1})-\varepsilon_1.
\]
Thus, we may choose \(n_1\geq M_1\) such that
\[
    \|x_{n_1}-y_{q_1}\|
    >
    R(y_{q_1})-\varepsilon_1.
\]
Since \(n_1\geq M_1\), we also have
\[
    R(x_{n_1})
    <
    D+\frac{\varepsilon_1}{2}.
\]
Therefore, there exists \(p_1>n_1\) such that
\[
    n\geq p_1
    \quad\Rightarrow\quad
    \|x_n-x_{n_1}\|<D+\varepsilon_1.
\]
Suppose that
$n_1<\cdots<n_{j-1}$ and $p_1,\ldots,p_{j-1}$ have already been chosen
so that, for every $i<j$,
\[
n\geq p_i
\quad\Rightarrow\quad
\|x_n-x_{n_i}\|<D+\varepsilon_i.
\]
Since
$R(y_{q_j})=\limsup_{n\to\infty}\|x_n-y_{q_j}\|$, there are arbitrarily
large $n$ such that
\[
\|x_n-y_{q_j}\|>R(y_{q_j})-\varepsilon_j.
\]
Thus, we may choose
\[
n_j>\max\{n_{j-1},M_j,p_1,\ldots,p_{j-1}\}
\]
such that
\[
\|x_{n_j}-y_{q_j}\|>R(y_{q_j})-\varepsilon_j.
\]
Since $n_j\geq M_j$,
$R(x_{n_j})<D+\varepsilon_j/2$. Hence, there exists \(p_j>n_j\) such that
\[
    n\geq p_j
    \quad\Rightarrow\quad
    \|x_n-x_{n_j}\|<D+\varepsilon_j.
\]
Set $u_j=x_{n_j}$. We first show that
$\mathsf{d}_a(\{u_j\})\leq D(\{x_n\})$. If $r>s$, then $n_r\geq p_s$, and hence
\[
\|u_r-u_s\|
=
\|x_{n_r}-x_{n_s}\|
<
D+\varepsilon_s.
\]
Therefore, if $r,s\geq k$, then
$\|u_r-u_s\|\leq D+\varepsilon_k$, and consequently
\[
\sup_{r,s\geq k}\|u_r-u_s\|
\leq D+\varepsilon_k.
\]
Letting $k\to\infty$, we obtain
\[
\mathsf{d}_a(\{u_j\})
=
\lim_{k\to\infty}\sup_{r,s\geq k}\|u_r-u_s\|
\leq D
=
D(\{x_n\}),
\]
which proves (i).\\
We now prove (ii). Fix $q\in\mathbb N$. Since $\{u_j\}$ is a
subsequence of $\{x_n\}$,
\[
r_a(\{u_j\},y_q)
\leq
r_a(\{x_n\},y_q)
=
R(y_q).
\]
On the other hand, since $q$ occurs infinitely many times in
$\{q_j\}$, there exist $j_1<j_2<\cdots$ such that $q_{j_\ell}=q$ for
every $\ell$. By the construction of $n_j$,
\[
\|u_{j_\ell}-y_q\|
=
\|x_{n_{j_\ell}}-y_{q_{j_\ell}}\|
>
R(y_q)-\varepsilon_{j_\ell}.
\]
Since $\varepsilon_{j_\ell}\to0$,
\[
\limsup_{\ell\to\infty}\|u_{j_\ell}-y_q\|
\geq R(y_q).
\]
As $\{u_{j_\ell}\}$ is a subsequence of $\{u_j\}$,
\[
r_a(\{u_j\},y_q)
\geq
\limsup_{\ell\to\infty}\|u_{j_\ell}-y_q\|
\geq R(y_q).
\]
Thus,
\[
r_a(\{u_j\},y_q)=r_a(\{x_n\},y_q)
\]
for every $q\in\mathbb N$.\\
Finally, define
\[
F(y)=r_a(\{x_n\},y)
\quad\text{and}\quad
G(y)=r_a(\{u_j\},y).
\]
Both $F$ and $G$ are $1$-Lipschitz. Indeed, by the triangle inequality,
\[
F(y)\leq F(z)+\|y-z\|
\quad\text{and}\quad
F(z)\leq F(y)+\|y-z\|,
\]
so that $|F(y)-F(z)|\leq\|y-z\|$. The same argument applies to $G$.\\
Let $y\in Y$. Since $\{y_q\}$ is dense in $Y$, there exists a
subsequence $\{y_{q_\ell}\}$ such that $y_{q_\ell}\to y$. Since
$F(y_{q_\ell})=G(y_{q_\ell})$ for every $\ell$,
\[
\begin{aligned}
|F(y)-G(y)|
&\leq
|F(y)-F(y_{q_\ell})|
+
|F(y_{q_\ell})-G(y_{q_\ell})|
+
|G(y_{q_\ell})-G(y)|\\
&\leq
2\|y-y_{q_\ell}\|.
\end{aligned}
\]
Letting $\ell\to\infty$, we obtain $F(y)=G(y)$. Hence
\[
r_a(\{u_j\},y)=r_a(\{x_n\},y)
\qquad\text{for every }y\in Y.
\]
Since $u_j=x_{n_j}$, the proof is complete.
\end{proof}

\begin{lemma}\label{lem:casini}
Let $X$ be a Banach space with uniform normal structure, and let
$\{x_n\}$ be a bounded sequence in $X$. Then there exists
$z\in\overline{\operatorname{conv}}\{x_n:n\in\mathbb N\}$ satisfying:

\begin{itemize}
    \item[(i)] $r_a(\{x_n\},z)\leq \widetilde N(X)D(\{x_n\});$
    \item[(ii)] $\|z-y\|\leq r_a(\{x_n\},y)$ for every $y\in X$.
\end{itemize}
\end{lemma}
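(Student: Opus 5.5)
The plan is to reduce to the original Casini--Maluta lemma \cite[Lemma 3.1]{CM} by passing to the subsequence supplied by Lemma \ref{lem:extraction}. Recall that the original lemma states: if $X$ has uniform normal structure and $\{u_j\}$ is bounded, then there is $z\in\overline{\operatorname{conv}}\{u_j\}$ with $r_a(\{u_j\},z)\leq \widetilde N(X)\,\mathsf d_a(\{u_j\})$ and $\|z-y\|\leq r_a(\{u_j\},y)$ for every $y\in X$. I will take this as known.

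\textbf{Step 1.} Apply Lemma \ref{lem:extraction} to $\{x_n\}$. This produces a subsequence $u_j=x_{n_j}$ with $\mathsf d_a(\{u_j\})\leq D(\{x_n\})$ and $r_a(\{u_j\},y)=r_a(\{x_n\},y)$ for all $y\in Y=\overline{\operatorname{span}}\{x_n\}$.

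\textbf{Step 2.} Apply the Casini--Maluta lemma to $\{u_j\}$. This gives $z\in\overline{\operatorname{conv}}\{u_j\}\subset\overline{\operatorname{conv}}\{x_n\}\subset Y$ such that
\[
r_a(\{u_j\},z)\leq \widetilde N(X)\,\mathsf d_a(\{u_j\}) \quad\text{and}\quad \|z-y\|\leq r_a(\{u_j\},y)\ \text{ for all } y\in X.
\]

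\textbf{Step 3.} Transfer (i) and (ii) back to $\{x_n\}$.

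For (i): since $z\in Y$, Lemma \ref{lem:extraction}(ii) gives $r_a(\{x_n\},z)=r_a(\{u_j\},z)\leq \widetilde N(X)\,\mathsf d_a(\{u_j\})\leq \widetilde N(X)\,D(\{x_n\})$.

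For (ii) there is a subtle point. Lemma \ref{lem:extraction} only guarantees equality of asymptotic radii on $Y$, not on all of $X$. However, no equality is needed here. Because $\{u_j\}$ is a subsequence of $\{x_n\}$, we always have $r_a(\{u_j\},y)\leq r_a(\{x_n\},y)$ for every $y\in X$, since a limsup along a subsequence is at most the full limsup. Hence
\[
\|z-y\|\leq r_a(\{u_j\},y)\leq r_a(\{x_n\},y)\qquad\text{for all } y\in X.
\]

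The only genuine obstacle is making sure that equality of radii is used only at points of $Y$. It is needed precisely at $z$, in part (i), and $z$ lies in $\overline{\operatorname{conv}}\{x_n\}\subset Y$. Part (ii) needs only the trivial inequality for subsequences.

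If one instead wants a self-contained argument rather than citing \cite{CM}, the plan is to reprove the Casini--Maluta step for $\{u_j\}$. Consider the sets $C_k=\overline{\operatorname{conv}}\{u_j:j\geq k\}$, pick near-Chebyshev centers $z_k\in C_k$ with $r(C_k,z_k)\leq \widetilde N(X)\,\mathsf d(C_k)+1/k$, and let $z$ be a weak cluster point of $(z_k)$; this uses reflexivity, which follows from uniform normal structure by \cite{maluta1984uniformly}. Weak lower semicontinuity of $y\mapsto\|z-y\|$ and of the radius functions then yields both estimates. This is routine, so citing the original lemma suffices.
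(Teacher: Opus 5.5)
Your proposal is correct and matches the paper's proof step for step: extract $\{u_j\}$ via Lemma \ref{lem:extraction}, apply Casini--Maluta's Lemma 3.1 to it, transfer (i) through $r_a(\{u_j\},z)=r_a(\{x_n\},z)$ (valid since $z\in\overline{\operatorname{conv}}\{x_n\}\subset Y$), and get (ii) from the subsequence inequality $r_a(\{u_j\},y)\le r_a(\{x_n\},y)$. Your optional self-contained sketch is unnecessary; it is sound provided you use $r_a(\{u_j\},z_k)\le\sup_{j\ge k}\|u_j-z_k\|$ together with weak lower semicontinuity of $u\mapsto r_a(\{u_j\},u)$ and of $u\mapsto\|u-y\|$ for fixed $y$.
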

\begin{proof}
Set $Y=\overline{\operatorname{span}}\{x_n:n\in\mathbb N\}$. By Lemma \ref{lem:extraction}, there exists a subsequence
$\{x_{n_j}\}=\{u_j\}$ of $\{x_n\}$ such that
$\mathsf{d}_a(\{u_j\})\leq D(\{x_n\})$ and
$r_a(\{x_n\},y)=r_a(\{u_j\},y)$ for every $y\in Y$.\\
Applying Lemma 3.1 of Casini--Maluta (see \cite{CM}) to the subsequence $\{u_j\}$,
we obtain
$z\in\overline{\operatorname{conv}}\{u_j:j\in\mathbb N\}$ such that
$$
r_a(\{u_j\},z)\leq \widetilde N(X)\mathsf{d}_a(\{u_j\})
$$
and
$$
\|z-y\|\leq r_a(\{u_j\},y)
\quad\text{for every }y\in X.
$$\\
Since $u_j=x_{n_j}\in Y$ for every $j\in\mathbb N$ and $Y$ is a
closed linear subspace of $X$, we have
$\overline{\operatorname{conv}}\{u_j:j\in\mathbb N\}\subset Y$.
Consequently, $z\in Y$. Hence, by Lemma \ref{lem:extraction},
$r_a(\{u_j\},z)=r_a(\{x_n\},z)$. Therefore,
\[
\begin{aligned}
r_a(\{x_n\},z)
&=r_a(\{u_j\},z)\\
&\leq \widetilde N(X)\mathsf{d}_a(\{u_j\})\\
&\leq \widetilde N(X)D(\{x_n\}),
\end{aligned}
\]
which proves (i).\\
To prove (ii), observe that, since $\{u_j\}$ is a subsequence of
$\{x_n\}$,
\[
\begin{aligned}
r_a(\{u_j\},y)
&=\limsup_{j\to\infty}\|u_j-y\|\\
&\leq \limsup_{n\to\infty}\|x_n-y\|\\
&=r_a(\{x_n\},y).
\end{aligned}
\]
Thus,
\[
\|z-y\|\leq r_a(\{u_j\},y)\leq r_a(\{x_n\},y)
\]
for every $y\in X$.\\
Finally, since $\{u_j\}$ is a subsequence of $\{x_n\}$,
$\overline{\operatorname{conv}}\{u_j:j\in\mathbb N\}
\subset\overline{\operatorname{conv}}\{x_n:n\in\mathbb N\}$.
Therefore,
$z\in\overline{\operatorname{conv}}\{x_n:n\in\mathbb N\}$.
\end{proof}



\section{Proof of main results}\label{sec:provas}
\begin{proof}[\textbf{Proof of Theorem \ref{thm:main1}}]
Consider the sequence $\{T^n(x)\}$, and let $z=z(x)$ be the point provided by Lemma \ref{lem:casini} for the sequence $\{T^n(x)\}$. Define
\[
d(x):=\limsup_{n\to\infty}\|T^n(x)-x\|.
\]
Condition \emph{(i)} of Lemma \ref{lem:casini} gives us
\[
r_a(\{T^n(x)\},z(x))\leq\tilde N(X)\cdot D(\{T^n(x)\}).
\]
Fix $m\in \mathbb{N}$. Then,
\[
    \begin{split}
    \limsup_{n\to\infty}\|T^nx-T^mx\|&\le\gamma\limsup_{n\to\infty}\|T^{n-m}x-x\|^{\alpha_m}\\
    &=\gamma\limsup_{j\to\infty}\|T^jx-x\|^{\alpha_m}\\ 
    &=\gamma d(x)^{\alpha_m}.
    \end{split}
\]
Thus,
    \[
    D(\{T^nx\})=\limsup_{m\to\infty}\limsup_{n\to\infty}\|T^nx-T^mx\|\le\limsup_{m\to\infty}\gamma d(x)^{\alpha_m}=\gamma d(x).
    \]
Therefore, $r_a(\{T^n(x)\},z(x))\leq\gamma\tilde N(X) d(x)$. Moreover, by condition \emph{(ii)},
\begin{align*}
    d(z) &= \limsup_{N\to\infty}\|T^N(z)-z\|\\
    &\le \limsup_{N\to\infty}r_a\big(\{T^n(x)\}, T^N(z)\big)\\
    &\le \limsup_{N\to\infty}\big(\limsup_{n\to\infty}\|T^n(x)-T^N(z)\|\big)\\
    &\le\gamma\limsup_{N\to\infty}\big(\lim_{j\to\infty}\sup_{n \ge j}\|T^{n-N}(x)-z\|\big)^{\alpha_N}\\
    &=\gamma\limsup_{N\to\infty}\big(\lim_{j\to\infty}\sup_{n \ge j-N}\|T^n(x)-z\|\big)^{\alpha_N}\\
    &= \gamma\limsup_{N\to\infty}\big(\limsup_{n \to \infty}\|T^n(x)-z\|\big)^{\alpha_N}\\
    &=\gamma \cdot r_a(\{T^n(x)\},z).
\end{align*}

Thus, 
\[
d(z)\le\gamma^2\tilde N(X) d(x)=\eta  d(x), 
\]
where $\eta=\gamma^2\tilde N(X)<1$. Define a sequence $\{x_n\}$ by choosing an arbitrary point $x_1\in K$ and setting $x_{n+1}=z(x_n)$. Then
\[
d(x_n)\leq \eta^{n-1}d(x_1)\longrightarrow 0
\qquad\text{as } n\to\infty.
\]
Moreover, the sequence $\{x_n\}$ is Cauchy. Indeed, for every $k\in\mathbb N$,
\[
\|x_{n+1}-x_n\| \leq \|x_{n+1}-T^k(x_n)\| + \|T^k(x_n)-x_n\|.
\]
Hence,
\[
\begin{split}
\|x_{n+1}-x_n\|&\leq\limsup_{k\to\infty}\|x_{n+1}-T^k(x_n)\| + \limsup_{k\to\infty}\|T^k(x_n)-x_n\|\\[2mm]
& =r_a({T^j(x_n)},x_{n+1}) + d(x_n)\\[2mm]
&\leq\gamma\tilde N(X)\cdot d(x_n) + d(x_n)\\[2mm]
&\leq\bigl(1+\gamma\tilde N(X)\bigr)\eta^{n-1}d(x_1).
\end{split}
\]
So, for any $q \geq 1$, we have
\[
\begin{split}
\|x_{n+q}-x_n\|&\leq \sum_{j=n}^{n+q-1}\|x_{j+1}-x_j\|\\
&\leq \bigl(1+\gamma\tilde N(X)\bigr)d(x_1)\sum_{j=n}^{n+q-1}\eta^{j-1}\\
&\leq\frac{\bigl(1+\gamma\tilde N(X)\bigr)d(x_1)}{(1-\eta)}\eta^{n-1}\longrightarrow0, \quad \text{as}\; n\to\infty.
\end{split}
\]
Therefore, there exists $y=\lim_{n\to\infty}x_n$. Furthermore,
\[
\begin{aligned}
\|y-Ty\|
&\leq \|y-x_n\|+\|x_n-T^N(x_n)\| \\
&\quad +\|T^N(x_n)-T(x_n)\|
+\|T(x_n)-T(y)\| \\
&\leq \|y-x_n\|+\|x_n-T^N(x_n)\| \\
&\quad +\gamma\|T^{N-1}(x_n)-x_n\|^{\alpha_1} +\gamma\|x_n-y\|^{\alpha_1}.
\end{aligned}
\]
Taking $\limsup_{N\to\infty}$, we obtain
\[
\|y-Ty\|\leq\|y-x_n\| + d(x_n) + \gamma d(x_n)^{\alpha_1} + \gamma\|x_n-y\|^{\alpha_1}.
\]
Finally, letting $n\to\infty$, we conclude that
\[
\|y-Ty\|=0.
\]
Hence,
\[
Ty=y.
\]
Since $K$ is closed and $\{x_n\}\subset K$, $y\in K$ and the proof is completed.
\end{proof}



\begin{proof}[\textbf{Proof of Theorem \ref{thm:main02}}]
Fix $x\in E$ and, for each $m\in\mathbb N$, consider the sequence
$(T^n x)_{n\geq m}$. Set
$C_m:=\overline{\operatorname{conv}}\{T^n x:n\geq m\}$.
Applying Lemma \ref{lem:casini} to $(T^n x)_{n\geq m}$, we obtain $y_m\in C_m$ such that $$r_a((T^n x)_{n\geq m},y_m) \leq\widetilde N(X)D((T^n x)_{n\geq m})$$ and $\|y_m-y\|\leq r_a((T^n x)_{n\geq m},y)$ for every $y\in X$.

Since removing finitely many terms does not change either $r_a$ or $D$, it follows that $r_a((T^n x),y_m)\leq\widetilde N(X)D((T^n x))$ and $\|y_m-y\|\leq r_a((T^n x),y)$ for every $y\in X$.

Since $y_m\in C_m\subset\overline{\operatorname{conv}}K$ for every $m$, and $K$ is bounded, the sequence $(y_m)$ is bounded. Moreover, since $X$ has uniform normal structure, $X$ is reflexive. Thus, by passing to a subsequence if necessary, we may assume that $y_{m_j}\rightharpoonup z(x)$ for some $z(x)\in X$.

Fix $m\in\mathbb N$. For all sufficiently large $j$, $m_j\geq m$, and hence $y_{m_j}\in C_{m_j}\subset C_m$. Since $C_m$ is closed and convex, it is weakly closed, and therefore $z(x)\in C_m$. Since $m$ is arbitrary, $z(x)\in\bigcap_{m\geq1}C_m$. On the other hand, we know from \cite{Bruck} that
\[
\bigcap_{k=1}^\infty \overline{\text{conv}}(\{T^j x : j \geq k\}) = \overline{\mathrm{conv}}(\omega_w(x)).
\]
Since $x\in E$, property $(P)$ yields $\omega_w(x)\subset E$.
As $E$ is closed and convex,
$\overline{\operatorname{conv}}\omega_w(x)\subset E$, and consequently
$z(x)\in E$.

The function $y\mapsto r_a((T^n x),y)$ is convex and norm continuous and,
therefore, weakly lower semicontinuous. Hence,
\[
r_a((T^n x),z(x))
\leq\liminf_{j\to\infty}r_a((T^n x),y_{m_j})
\leq\widetilde N(X)D((T^n x)).
\]
Furthermore, for each $y\in X$, the function $u\mapsto\|u-y\|$ is weakly
lower semicontinuous. Therefore,
\[
\|z(x)-y\|
\leq\liminf_{j\to\infty}\|y_{m_j}-y\|
\leq r_a((T^n x),y).
\]

Thus, for each $x\in E$, we have constructed a point $z(x)\in E$ satisfying the same estimates as those provided by Lemma \ref{lem:casini}. Repeating the argument used in the proof of Theorem \ref{thm:main1}, we obtain 
\[
d(z(x))\leq\gamma^2\widetilde N(X)d(x).
\]
Since $\gamma<\sqrt{N(X)}$, we have $\gamma^2\widetilde N(X)<1$. Choose $x_1\in E$ and define inductively $x_{n+1}:=z(x_n)$. Then $x_n\in E$ for every $n$. The same argument as in the proof of Theorem \ref{thm:main1} shows that $(x_n)$ converges to a fixed point $p$ of $T$. Since $E$ is closed, $p\in E$. Therefore, $T$ has a fixed point in $E$.
\end{proof}



\begin{proof}[\textbf{Proof of Theorem \ref{thm:main03}}]
    By Zorn's lemma, following the argument in \cite{XuExistence1991}, there exists a set $E$ that is minimal, with respect to inclusion, among all nonempty closed convex sets satisfying \eqref{prop:P}. Choose $\gamma$ such that $1<\gamma<\sqrt{N(X)}$. Since $k_n\to0$ and $\gamma>1$, there exists $N\in\mathbb{N}$ such that $1+k_n < \gamma<[N(X)]^{1/2}$ for all $n \ge N$. So,
    \[
        \|T^nx-T^ny\| \le (1+k_n)\|x-y\|^{\alpha_n} \le \gamma\|x-y\|^{\alpha_n}, \,\,\forall x,y\in K\textrm{ and }\forall n\ge N.
    \]
    Thus, being $\beta_n=\alpha_{Nn}$, we have
    \[
        \|(T^N)^nx-(T^N)^ny\| \le (1+k_{Nn})\|x-y\|^{\beta_n} \le \gamma\|x-y\|^{\beta_n},\]
        for all $x,y\in K$ and for all $n\in \mathbb{N}$.  
    In particular, $T^N$ is uniformly asymptotically Hölder $\gamma$-Lipschitzian mapping with $\gamma< [N(X)]^{1/2}$. Furthermore, it is obvious that $\omega_w^{T^N}(x)\subset\omega_w^{T}(x)\subset E$ for all $x\in E$ (Here, $\omega_w^T(x)$ denotes $\omega_w(x)$ with respect to the mapping $T$.). By Theorem \ref{thm:main02}, $T^N$ has a fixed point $z\in E$.\\

    \noindent Define
    \[
    D:=\{z,Tz,\dots,T^{N-1}z\}.
    \]

    \noindent Note that, for all $i=0,1,\dots,N-1$, $T^iz=T^{jN+i}z$, for all $j\in\mathbb{N}$. So, $T^iz\in\omega(z)$ and then $D\subset\omega(z)\subset\omega_w(z)\subset E$.

    \noindent Assume $\operatorname{diam}(D) > 0$. Now, $\overline{\operatorname{conv}} (D)$ is compact (since $D$ is finite) and convex. Thus, it has a normal structure. Therefore, there exists $u \in \overline{\operatorname{conv}}(D) \subset E$ that is not diametral, that is, such that
    \begin{equation}\label{prop:d}
    d:=\sup_{y\in D}\|u-y\|<\operatorname{diam}(D).
    \end{equation}
    Let 
    \[
    C:=\left\{z\in E:\,\sup_{y\in D}\|z-y\| \le d\right\}.
    \]
    Note that $C\neq\varnothing$ ($u\in C$), closed and convex. Furthermore, $C$ satisfies (\ref{prop:P}). Indeed, let $x\in C$ and $v\in\omega_w(x)$, that is, $T^{n_i}x\rightharpoonup v$ for some $n_i\to\infty$. Then, for all $y\in D$

    \[
    \|v-y\| \le \liminf_{i\to\infty}\|T^{n_i}(x)-y\| \le \limsup_{i\to\infty}\|T^{n_i}(x)-y\|.
    \]
    Note also that $T(D) = D$ and therefore, $T^n(D)=D$, for all $n\in\mathbb{N}$. Thus, for each natural $n$, there exists $y_n\in D$ such that $T^n(y_n)=y$ . It follows that

	\[
	   \begin{split}
	   \|v-y\| &\le \limsup_{i\to\infty}\|T^{n_i}(x)-T^{n_i}(y_{n_i})\| \le \limsup_{i\to\infty}(1+k_{n_i})\|x-y_{n_i}\|^{\alpha_{n_i}}\\
	   &\le \limsup_{i\to\infty}(1+k_{n_i})d^{\alpha_{n_i}}=d.
	   \end{split}	
	\]
    That is, $\displaystyle\sup_{y\in D}\|v - y\| \le d$, which gives us that $v\in C$. However, $C$ is a proper subset of $E$, because of (\ref{prop:d}). This contradicts the minimality of $E$. Therefore, $\operatorname{diam}(D)=0$ and $Tz=z$.

\end{proof}

\begin{remark}
Theorem \ref{thm:main03} can also be deduced from
\cite[Corollary~2]{LiSims2002}. Indeed, let
$L=\operatorname{diam}(K)$ and set
\[
\varepsilon_n
:=
\sup_{0\le t\le L}
\left|(1+k_n)t^{\alpha_n}-t\right|.
\]
Since $\alpha_n\to1$ and $k_n\to0$, the uniform
convergence of $t^{\alpha_n}$ to $t$ on $[0,L]$
implies that $\varepsilon_n\to0$. Hence, for every
$x\in K$,
\[
\limsup_{n\to\infty}\sup_{y\in K}
\left(
\|T^n x-T^n y\|-\|x-y\|
\right)
\le
\lim_{n\to\infty}\varepsilon_n
=0.
\]
Thus, $T$ is of asymptotically nonexpansive type.
Moreover, the estimate
\[
\|Tx-Ty\|
\le (1+k_1)\|x-y\|^{\alpha_1},
\qquad x,y\in K,
\]
ensures that $T$ is continuous. Therefore,
\cite[Corollary~2]{LiSims2002} yields a fixed point
of $T$ in $K$. The proof given above provides
an alternative derivation through Theorem \ref{thm:main02}.
\end{remark}

\section{Examples}\label{sec:exemplos}

\begin{example}
This example illustrates how Lemma~2.2 can strictly reduce
the asymptotic diameter while preserving the asymptotic
radius at every point of the closed linear span of the
original sequence.

Let $(e_j)$ be the canonical basis of $\ell_2$, and define
\[
x_{2j-1}=e_j,
\qquad
x_{2j}=-e_j,
\qquad j\ge1.
\]
For each fixed $m$, the vectors $x_n$ and $x_m$ are
orthogonal for all sufficiently large $n$. Hence
\[
\limsup_{n\to\infty}\|x_n-x_m\|=\sqrt{2},
\]
and therefore
\[
D((x_n))
=
\limsup_{m\to\infty}\limsup_{n\to\infty}
\|x_n-x_m\|
=
\sqrt{2}.
\]
On the other hand, every tail of $(x_n)$ contains a pair
$e_j,-e_j$, whose distance is $2$. Since $\|x_n\|=1$
for every $n$, it follows that
\[
d_a((x_n))=2.
\]

Now consider the subsequence $u_j=x_{2j-1}=e_j$.
Since $\|e_i-e_j\|=\sqrt{2}$ whenever $i\ne j$, we have
\[
d_a((u_j))=\sqrt{2}=D((x_n))<d_a((x_n)).
\]
Moreover,
\[
Y:=\overline{\operatorname{span}}\{x_n:n\ge1\}
=\ell_2.
\]
For every $y=(y_j)\in\ell_2$, we have $y_j\to0$, and thus
\[
\|\pm e_j-y\|^2
=
1+\|y\|^2\mp 2y_j
\longrightarrow
1+\|y\|^2.
\]
Consequently,
\[
r_a((u_j),y)
=
r_a((x_n),y)
=
\sqrt{1+\|y\|^2},
\qquad y\in Y.
\]

Thus, both conclusions of Lemma \ref{lem:extraction} hold for $(u_j)$,
with a strict reduction in the asymptotic diameter.
The preservation of the asymptotic radii allows radius
estimates obtained for the subsequence to be transferred
to the original sequence at every point of $Y$.
\end{example}

Now, we show some examples in $X=\ell_2$. It is known that $N(\ell_2)=\sqrt{2}$ (See \cite{Benavides}). In particular, $\ell_2$ has uniformly normal structure. 
\begin{example}\label{ex:non-lips}
We give an example of a non-Lipschitzian uniformly
asymptotically H\"older $k$-Lipschitzian mapping with $1<k<\sqrt{N(\ell_2)}$. Let $\alpha_n=\frac{n+1}{n+2}$, $p_1=\alpha_1=\frac23$, $p_i=\frac{\alpha_i}{\alpha_{i-1}}$ with $i\geq2$, and set
\[
    K_0=
    \left\{
        (t_i)\in\ell_2:
        t_i\geq0,\quad
        \sum_{i=1}^{\infty}t_i\leq\frac1{\sqrt2}
    \right\}.
\]
Define $T_0:K_0\to K_0$ by
\[
    T_0(t_1,t_2,\ldots)
    =
    \frac14
    \sum_{i=1}^{\infty}t_i^{p_i}e_{i+1}.
\]
Note that $T_0$ is well-definid. Since $\operatorname{diam}(K_0)\leq1$, a direct computation using $\prod_{j=1}^{n}p_j=\alpha_n$, $1\leq
    \prod_{j=i}^{i+n-1}p_j
    =
    \frac{\alpha_{i+n-1}}{\alpha_{i-1}}
    \leq\frac32
    \quad(i\geq2)$, gives
\[
    \|T_0^n(x)-T_0^n(y)\|_2
    \leq
    \frac{\sqrt{13}}8
    \|x-y\|_2^{\alpha_n},
    \qquad x,y\in K_0,\quad n\geq1.
\]

\noindent For $\lambda>1$, set $K_\lambda=\lambda K_0$ and define
\[
    T_\lambda(x)
    =
    \lambda T_0\left(\frac{x}{\lambda}\right).
\]
Then $T_\lambda^n(x)
    =
    \lambda T_0^n\left(\frac{x}{\lambda}\right)$, and hence
\[
\begin{aligned}
    \|T_\lambda^n(x)-T_\lambda^n(y)\|_2
    &\leq
    \frac{\sqrt{13}}8
    \lambda^{1-\alpha_n}
    \|x-y\|_2^{\alpha_n}\\
    &\leq
    \frac{\sqrt{13}}8
    \lambda^{1/3}
    \|x-y\|_2^{\alpha_n}.
\end{aligned}
\]
Thus we may take $k_\lambda
    =
    \frac{\sqrt{13}}8\lambda^{1/3}$. Since $N(\ell_2)=\sqrt2$, choosing $\frac{512}{13\sqrt{13}}
    <
    \lambda
    <
    \frac{512\cdot2^{3/4}}{13\sqrt{13}}$ yields $1<k_\lambda<2^{1/4}
    =
    \sqrt{N(\ell_2)}$.

\noindent Moreover, $T_\lambda$ is not Lipschitzian. Indeed, for $0<h<\frac{\lambda}{\sqrt{2}}$, $T_\lambda(he_1)
    =
    \frac14\lambda^{1/3}h^{2/3}e_2$, so
\[
    \frac{
        \|T_\lambda(he_1)-T_\lambda(0)\|_2
    }{
        \|he_1\|_2
    }
    =
    \frac14\lambda^{1/3}h^{-1/3}
    \longrightarrow\infty
\]
as $h\to0^+$. Finally,
\[
    \operatorname{Fix}(T_\lambda)=\{0\}.
\]
Hence $T_\lambda$ is a non-Lipschitzian uniformly asymptotically
H\"older $k_\lambda$-Lipschitzian mapping with $1<k_\lambda<\sqrt{N(\ell_2)}$ and a unique fixed point.
\end{example}

\begin{remark}
  Note that Theorem~3.1 in \cite{CM} is not applicable to this example, since neither $T_0$ nor $T_\lambda$ is Lipschitzian and, consequently, neither mapping is uniformly $k$-Lipschitzian.
\end{remark}

\begin{example}
The assumption $\alpha_n\to 1$ cannot, in general, be replaced by a fixed H\"older exponent $\alpha<1$. Indeed, let $X=\ell_2$ and choose $\frac14<r<\frac{1}{2\sqrt{2}}$. Set $K=rB_{\ell_2}$ and define $T:K\to K$ by (see \cite{kakutani1943})
\[
    T(x_1,x_2,\ldots)
    =
    \left(
        \sqrt{r^2-\|x\|_2^2},
        x_1,x_2,\ldots
    \right).
\]
For every $x\in K$, $\|T(x)\|_2^2
    =
    r^2-\|x\|_2^2+\|x\|_2^2
    =
    r^2$, so $T(K)\subset K$.

\noindent Let $S:\ell_2\to\ell_2$ be the right-shift operator. Since
$\|T(x)\|_2=r$, we have
\[
    T^n(x)=S^{n-1}(T(x)),
    \qquad n\ge1.
\]
Hence, since $S$ is an isometry, $\|T^n(x)-T^n(y)\|_2
    =
    \|T(x)-T(y)\|_2$ for all $x,y\in K$ and $n\ge1$.

\noindent Now, using $|\sqrt{a}-\sqrt{b}|^2\le |a-b|$, $a,b\ge0$, we obtain
\[
\begin{aligned}
    \|T(x)-T(y)\|_2^2
    &\le
    \left|\|x\|_2^2-\|y\|_2^2\right|
    +
    \|x-y\|_2^2
    \\
    &\le
    2r\|x-y\|_2+\|x-y\|_2^2.
\end{aligned}
\]
Since $\|x-y\|_2\le2r$, it follows that $\|T(x)-T(y)\|_2^2
    \le
    4r\|x-y\|_2$. Therefore,
\[
    \|T^n(x)-T^n(y)\|_2
    \le
    2\sqrt{r}\,
    \|x-y\|_2^{1/2},
    \qquad n\ge1.
\]
Thus $T$ is uniformly H\"older-Lipschitzian with the fixed exponent $\alpha=\frac12$ and constant $k=2\sqrt{r}$. Since $N(\ell_2)=\sqrt{2}$ (See \cite{Benavides}), our choice of $r$ yields $1<k=2\sqrt{r}
    <
    2^{1/4}
    =
    \sqrt{N(\ell_2)}$.
    
\noindent Finally, $T$ is fixed-point free. Indeed, if $T(x)=x$, then
$\|x\|_2=r$, and hence $x=T(x)=Sx$. This forces $x=0$, contradicting $\|x\|_2=r>0$. Therefore, $\operatorname{Fix}(T)=\varnothing$.

\noindent Hence, even under the condition $1<k<\sqrt{N(X)}$, the fixed point conclusion may fail when the sequence
$\alpha_n\to1$ is replaced by a fixed exponent $\alpha<1$.
\end{example}
Before presenting our next example, we recall that, for Banach spaces $X$
and $Y$, $X\oplus_2Y$ denotes their $\ell_2$-direct sum, that is, the space
$X\times Y$ endowed with the norm
\[
\|(x,y)\|
=
\left(\|x\|_X^2+\|y\|_Y^2\right)^{1/2}.
\]

\begin{example}
\label{ex:fixed-point-free-non-lipschitz}

We now give a fixed-point free example in the genuinely non-Lipschitzian
setting. Let $\alpha_n=\frac{n+1}{n+2}$, $n\in\mathbb{N}$, so that $\alpha_n\uparrow1$. First, let $C_0=B_{\ell_2}^{+}=\{x\in\ell_2:\, x_i\ge 0,\,\|x\|_2\le1\}$ and denote by $S$ the right-shift operator on $\ell_2$. Let $G:C_0\to C_0$ be the classical Goebel--Kirk--Thele mapping, obtained by normalizing the Kakutani mapping, i.e., 
\[
    G(x)=\frac{\Phi(x)}{\|\Phi(x)\|_2},
\]
where $\Phi(x)=(1-\|x\|_2)e_1+Sx$ (See \cite{kakutani1943} and \cite{thele1974}). Since
\[
    \|\Phi(x)\|_2^2
    =
    (1-\|x\|_2)^2+\|x\|_2^2
    \geq\frac12,
\]
the map $G:C_0\to C_0$ is well defined. Moreover,
\[
    G^n(x)=S^{n-1}G(x),
\]
and $\|G^n(x)-G^n(y)\|_2
    \leq2\|x-y\|_2$, $n\geq1$. It is also immediate that $\operatorname{Fix}(G)=\varnothing$.

\noindent Set $C=\frac14C_0$, $A(x)=\frac14G(4x)$. Then
$\operatorname{Fix}(A)=\varnothing$, $\|A^n(x)-A^n(y)\|_2
    \leq2\|x-y\|_2$, and $\operatorname{diam}(C)=\frac{\sqrt2}{4}$.

For the second component, let $B_0:K_0\to K_0$ be the mapping introduced in Example \ref{ex:non-lips}, namely
\[
    K_0=
    \left\{
        (t_i)\in\ell_2:
        t_i\geq0,\quad
        \sum_{i=1}^{\infty}t_i\leq\frac1{\sqrt2}
    \right\},
\]
\[
    B_0(t_1,t_2,\ldots)
    =
    \frac14\sum_{i=1}^{\infty}t_i^{p_i}e_{i+1},
\]
where $p_1=\frac23$, $p_i=\frac{\alpha_i}{\alpha_{i-1}}$, $i\geq2$. As shown above,
\[
    \|B_0^n(u)-B_0^n(v)\|_2
    \leq
    \frac{\sqrt{13}}8
    \|u-v\|_2^{\alpha_n},
\]
and $B_0$ is not Lipschitzian. Set $D=\frac12K_0$, $B(u)=\frac12B_0(2u)$. Then
\[
    \|B^n(u)-B^n(v)\|_2
    \leq
    \frac{\sqrt{13}}8
    \|u-v\|_2^{\alpha_n},
\]
$B$ is not Lipschitzian, and $\operatorname{diam}(D)\leq\frac12$.

Now consider $X=\ell_2\oplus_2\ell_2$ and $K=C\times D$. Since $X$ is isometrically isomorphic to $\ell_2$, $N(X)=\sqrt2$. Furthermore, $\operatorname{diam}(K)^2\leq
    \frac18+\frac14
    =
    \frac38$, so $\operatorname{diam}(K)<1$.

Define $T:K\to K$,
\[
    T(x,u)=(A(x),B(u)).
\]
For $z=(x,u)$, $w=(y,v)$ and
$d=\|z-w\|\leq1$, we have
\[
\begin{aligned}
    \|T^n z-T^n w\|^2
    &=
    \|A^n x-A^n y\|_2^2
    +
    \|B^n u-B^n v\|_2^2\\
    &\leq
    \left(
        4+\frac{13}{64}
    \right)d^{2\alpha_n}.
\end{aligned}
\]
Hence
\[
    \|T^n z-T^n w\|
    \leq
    \frac{\sqrt{269}}8
    \|z-w\|^{\alpha_n}
\]
for every $z,w\in K$ and $n\geq1$. Moreover, $\operatorname{Fix}(T)=\varnothing$, since $\operatorname{Fix}(A)=\varnothing$. Finally, $T$ is not
Lipschitzian, because for $h$ small enough and any fixed $x_0\in C$,
\[
    \frac{
        \left\|
            T\left(x_0,\frac h2e_1\right)-T(x_0,0)
        \right\|
    }{
        \left\|
            \left(x_0,\frac h2e_1\right)-(x_0,0)
        \right\|
    }
    =
    \frac14h^{-1/3}
    \longrightarrow\infty.
\]

\noindent Thus $T$ is a fixed-point free, non-Lipschitzian uniformly
asymptotically H\"older $k$-Lipschitzian mapping with $k=\frac{\sqrt{269}}8
    >
    \sqrt{N(X)}$.
    
This complements the preceding examples by showing that, beyond the
threshold $k<\sqrt{N(X)}$, the fixed point conclusion may fail even for
genuinely non-Lipschitzian mappings with $\alpha_n\to1$.
\end{example}


\section*{Acknowledgements}
The authors would like to express their sincere gratitude to Professor C.~S.~Barroso for introducing them to this topic and for conceiving the overall research project.

\section*{Data Availability}
Data sharing is not applicable to this article as no datasets were generated or analyzed during the current study.

\section*{Declaration of Competing Interest}
The authors declare that they have no known competing financial interests or personal relationships that could have appeared to influence the work reported in this paper.

\section*{Use of Generative AI}

During the preparation of this work, the authors used generative AI tools to assist with language refinement, bibliographic search, as well as preliminary consistency checks and critical evaluation of the arguments. All outputs were independently cross-checked. The authors reviewed and edited all material and assume full responsibility for the final content and mathematical accuracy of the manuscript.


\end{document}